\theoremstyle{plain}%
 \newtheorem{theorem}{Theorem}
\theoremstyle{remark}
\theoremstyle{definition}
\newtheorem{example}{Example}
\newcommand{\dotDelta}{{\vphantom{\Delta}\mathpalette\d@tD@lta\relax}}
\newcommand{\d@tD@lta}[2]{%
 \ooalign{\hidewidth$\m@th#1\mkern-1mu\cdot$\hidewidth\cr$\m@th#1\Delta$\cr}%
}
\begin{document}

%\author

\begin{center}
{\large Applications of a class of transformations of complex sequences}
\end{center}

\begin{center}
{\textsc{John M. Campbell} } 

 \ 

\end{center}

\begin{abstract}
 Through an application of a remarkable result due to Mishev in 2018 concerning the inverses for a class of transformations of sequences of complex 
 numbers, we obtain a very simple proof for a famous series for $\frac{1}{\pi}$ due to Ramanujan. We then apply Mishev's transform to provide proofs for 
 a number of related hypergeometric identities, including a new and simplified proof for a family of series for $\frac{1}{\pi}$ previously obtained by Levrie 
 via Fourier--Legendre theory. We generalize this result using Mishev's transform, so as to extend a result due to Guillera on a Ramanujan-like 
 series involving cubed binomial coefficients and harmonic numbers. 
\end{abstract}

\noindent {\small MSC Classification: 33C20, 11Y60}

 \ 

\noindent {\small Key words: Ramanujan series, closed form, Ramanujan-type series, hypergeometric series }

\section{Introduction}
 Included in Ramanujan's first letter to Hardy was the now-famous formula 
\begin{equation}\label{mainRamanujan}
 \frac{2}{\pi} = \sum_{n=0}^{\infty} \left( - \frac{1}{64} \right)^{n} \binom{2n}{n}^{3} (4n+1) 
\end{equation}
 that had been previously proved by Bauer in 1859 \cite{Bauer1859} via a Fourier--Legendre expansion; see also 
 \cite{Campbell2021,ChuZhang2014,Levrie2010}, along with \cite{BaruahBerndtChan2009} and the references pertaining to \eqref{mainRamanujan} therein. 
 As suggested in \cite{GuilleraRogers2014}, Ramanujan's series evaluations as in \eqref{mainRamanujan} are of interest in part because of the close 
 association of such formulas with advanced subjects in number theory and complex analysis. In 1994, Zeilberger \cite{EkhadZeilberger1994} used the 
 \emph{Wilf--Zeilberger} (WZ) method \cite{PetkovsekWilfZeilberger1996} to produce a very simple and elegant computer proof of \eqref{mainRamanujan}. 
 In this article, we present a new, one-page proof of \eqref{mainRamanujan} that does not involve the WZ method. This new proof of ours relies on a 
 remarkable transformation identity for complex sequences introduced in 2018 by Mishev in what we consider to be an overlooked and underrated research 
 article \cite{Mishev2018} that does not seem to have been yet cited. Furthermore, we apply Mishev's transform to construct a simplified 
 proof for and a generalization of 
 an identity due to Levrie \cite{Levrie2010} on an infinite family of series for $\frac{1}{\pi}$ generalizing \eqref{mainRamanujan}, 
 and we apply our generalization of Levrie's identity to extend a remarkable result due to Guillera \cite{Guillera2013} 
 on a Ramanujan-like series involving harmonic numbers and cubed central binomial coefficients. 
 Letting $H_{m} = 1 + \frac{1}{2} + \cdots + \frac{1}{m}$ denote the $m^{\text{th}}$ harmonic number, 
 one of the main results in this article is our Ramanujan-like series formula 
\begin{equation}\label{20211214947PM1A}
 -\frac{2 \ln 2}{\pi} = \sum_{n = 0}^{\infty} \left( -\frac{1}{64} \right)^{n} \binom{2 n}{n}^3 (4 n + 1) H_{2 n}, 
\end{equation}
 noting the resemblance to Ramanujan's formula in \eqref{mainRamanujan}. 

\subsection{Mishev's transform} 
 We will use the Pochhammer symbol 
 $(\alpha)_{n} = \alpha (\alpha + 1) \cdots (\alpha + n - 1)$ for $n \in \mathbb{N}_{0}$
 and the usual notation for generalized hypergeometric series: 
\begin{equation*}
 {}_{p}F_{q} \left[ \begin{matrix} 
 a_{1}, a_{2}, \ldots, a_{p} \\ 
 b_{1}, b_{2}, \ldots, b_{q} \end{matrix} \ ; \ x \right] = 
 \sum_{n = 0}^{\infty} \frac{ \left( a_{1} \right)_{n} 
 \left( a_{2} \right)_{n} \cdots 
 \left( a_{p} \right)_{n} }{ \left( b_{1} \right)_{n} 
 \left( b_{2} \right)_{n} \cdots \left( b_{q} \right)_{n} } \frac{x^{n}}{n!}.
\end{equation*}
 We also write $$ \left[ \begin{matrix} \alpha_{1}, \alpha_{2}, \ldots, \alpha_{\ell_{1}} \vspace{1mm} \\ \beta_{1}, \beta_{2}, \ldots, \beta_{\ell_{2}} 
 \end{matrix} \right]_{n} = \frac{ \left( \alpha_{1} \right)_{n} \left( \alpha_{2} \right)_{n} \cdots \left( \alpha_{\ell_{1}} \right)_{n} }{ \left( \beta_{1} 
 \right)_{n} \left( \beta_{2} \right)_{n} \cdots \left( \beta_{\ell_{2}} \right)_{n}}. $$ The main transformation identity in \cite{Mishev2018} 
 may be formulated in an equivalent way as below. 

\begin{theorem}\label{theoremMishev}
 (Mishev, 2018) For a sequence $(x_{n} : n \in \mathbb{N}_{0})$ of complex numbers, 
 and for a parameter $a$ in $\mathbb{C} \setminus \{ 0, -1, -2, \ldots \}$, 
 the equality 
\begin{equation}\label{equationMishev}
 x_{n} = \frac{1}{n! (a + 1)_n}\sum _{k=0}^n 
 \left[ \begin{matrix} a, 1 + \frac{a}{2}, -n \vspace{1mm}\\ 
 1, \frac{a}{2}, 1 + a + n \end{matrix} \right]_{k} 
 \sum _{\ell=0}^k (-k)_\ell (k+a)_\ell x_{\ell} 
\end{equation}
 holds true ({\bf Theorem 3.1} in \cite{Mishev2018}). 
\end{theorem}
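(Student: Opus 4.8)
The plan is to recognise the inner factor $(-k)_\ell(k+a)_\ell$ as a Newton basis polynomial evaluated at the node $k(k+a)$, which turns \eqref{equationMishev} into a disguised Lagrange interpolation identity; after that only elementary bookkeeping with Pochhammer symbols remains. Set $\lambda_m := m(m+a)$ for $m \in \mathbb{N}_0$. Since $\lambda_i = \lambda_j$ rearranges to $(i-j)(i+j+a) = 0$ and $a \notin \{0,-1,-2,\ldots\}$, the numbers $\lambda_0, \lambda_1, \lambda_2, \ldots$ are pairwise distinct. The key elementary observation is the factorisation
\begin{equation*}
 (-k)_\ell (k+a)_\ell = \prod_{i=0}^{\ell-1}(i-k)(i+k+a) = (-1)^{\ell}\prod_{i=0}^{\ell-1}\bigl(\lambda_k - \lambda_i\bigr),
\end{equation*}
which uses $\lambda_k - \lambda_i = (k-i)(k+i+a)$.

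Next, fix $n$ and put $y_k := \sum_{\ell=0}^{k}(-k)_\ell(k+a)_\ell\,x_\ell$ for $0 \le k \le n$; by the factorisation, $y_k = \sum_{\ell=0}^{k}(-1)^\ell x_\ell \prod_{i=0}^{\ell-1}(\lambda_k - \lambda_i)$. Introduce the polynomial $P(\lambda) := \sum_{\ell=0}^{n}(-1)^\ell x_\ell \prod_{i=0}^{\ell-1}(\lambda - \lambda_i)$, of degree at most $n$. Since $\prod_{i=0}^{\ell-1}(\lambda_k - \lambda_i)$ contains the vanishing factor with $i = k$ whenever $\ell > k$, one gets $P(\lambda_k) = y_k$ for all $k \in \{0, \ldots, n\}$, so $P$ is the unique polynomial of degree $\le n$ interpolating the data $(\lambda_k, y_k)_{k=0}^{n}$. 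Comparing the coefficient of $\lambda^{n}$ in the Newton form of $P$, which is $(-1)^{n}x_n$, with the coefficient of $\lambda^n$ read off from the Lagrange form of $P$ yields
\begin{equation*}
 (-1)^{n}x_n = \sum_{k=0}^{n}\frac{y_k}{\prod_{0\le i\le n,\ i\ne k}(\lambda_k - \lambda_i)}.
\end{equation*}

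Finally, I would evaluate the denominator and match the coefficient. Splitting $\lambda_k - \lambda_i = (k-i)(k+i+a)$ and using $\prod_{i\ne k}(k-i) = (-1)^{n-k}k!\,(n-k)!$ and $\prod_{i\ne k}(k+i+a) = (a+k)_{n+1}/(2k+a)$ over $0 \le i \le n$, the product equals $(-1)^{n-k}k!\,(n-k)!\,(a+k)_{n+1}/(2k+a)$, so the coefficient of $y_k$ in the formula for $x_n$ is $(-1)^{k}(2k+a)\big/\bigl(k!\,(n-k)!\,(a+k)_{n+1}\bigr)$. Rewriting $(-n)_k = (-1)^{k}n!/(n-k)!$ and $(1+\tfrac a2)_k/(\tfrac a2)_k = (a+2k)/a$, this coefficient equals $\frac{1}{n!\,(a+1)_n}\left[\begin{smallmatrix}a,\,1+\frac a2,\,-n\\ 1,\,\frac a2,\,1+a+n\end{smallmatrix}\right]_k$ precisely when $a\,(a+1)_n\,(1+a+n)_k = (a)_k\,(a+k)_{n+1}$, and indeed both sides equal $(a)_{n+k+1}$; substituting the definition of $y_k$ back in then reproduces \eqref{equationMishev} verbatim. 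The one step I expect to be the real obstacle is spotting the factorisation $(-k)_\ell(k+a)_\ell = (-1)^\ell\prod_{i<\ell}(\lambda_k - \lambda_i)$ — once it is found, everything else is routine. (If one prefers to bypass interpolation, the identity follows equally well by verifying the triangular orthogonality relation $\sum_k \left[\begin{smallmatrix}a,\,1+\frac a2,\,-n\\ 1,\,\frac a2,\,1+a+n\end{smallmatrix}\right]_k(-k)_\ell(k+a)_\ell = n!\,(a+1)_n\,\delta_{n\ell}$, which after a shift of summation index reduces to the very-well-poised evaluation ${}_{3}F_{2}\!\left[\begin{smallmatrix}b,\,1+\frac b2,\,-m\\ \frac b2,\,1+b+m\end{smallmatrix};\,1\right] = \delta_{m0}$.)
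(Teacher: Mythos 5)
Your argument is correct, and it takes a genuinely different route from the one the paper relies on. The paper does not prove Theorem \ref{theoremMishev} itself: it defers to Mishev's article, whose proof (as the paper summarizes) inverts the transform $L_a(x)_n=\sum_{k=0}^n(-n)_k(n+a)_k x_k$ by means of Dixon's classical summation theorem, i.e.\ by establishing precisely the triangular orthogonality relation you relegate to your closing parenthesis (the very-well-poised ${}_3F_2(1)$ evaluation you quote is a standard corollary of Dixon). Your primary argument instead observes that $(-k)_\ell(k+a)_\ell=(-1)^\ell\prod_{i<\ell}(\lambda_k-\lambda_i)$ with $\lambda_m=m(m+a)$, so that $y_k=L_a(x)_k$ is the evaluation at $\lambda_k$ of the Newton-form polynomial $P$ of degree $\le n$ with leading coefficient $(-1)^nx_n$; equating this with the leading coefficient of the Lagrange form gives the inversion, and the bookkeeping you carry out (the products $\prod_{i\ne k}(k-i)=(-1)^{n-k}k!\,(n-k)!$ and $\prod_{i\ne k}(k+i+a)=(a+k)_{n+1}/(2k+a)$, and the identity $a\,(a+1)_n(1+a+n)_k=(a)_k(a+k)_{n+1}=(a)_{n+k+1}$) all checks. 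What your route buys is that the inversion becomes completely elementary --- pure polynomial interpolation at the pairwise distinct nodes $\lambda_k$ (distinctness correctly using $a\notin\{0,-1,-2,\ldots\}$), with no hypergeometric summation theorem needed --- and it makes structurally transparent \emph{why} $L_a$ is invertible. What Mishev's route buys is that it fits the identity into the standard machinery of well-poised series, which is closer in spirit to how the rest of the paper then exploits the transform.
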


 This is proved in \cite{Mishev2018} by defining a transform 
 $ L_{a}(x)_{n} = 
 \sum_{k=0}^{n} (-n)_{k} (n + a)_{k} x_{k} $ 
 for a complex-valued sequence $(x_{n} )_{ n \in \mathbb{N}_{0}}$, 
 and by determining the inverse of this transform 
 using Dixon's classical summation theorem. 

\section{A new and simple proof of Ramanujan's formula}\label{sectionmain}
 Zeilberger's proof of \eqref{mainRamanujan} relied on a finite hypergeometric sum identity that was established via a computer-generated WZ proof 
 certificate, and Carlson's theorem \cite[p.\ 39]{Bailey1935} \cite{Weisstein} 
 was used to establish 
 that the corresponding identity obtained by replacing the upper limit
 of the aforementioned finite sum with $\infty$ holds for suitably bounded real $n$, 
 with \eqref{mainRamanujan} as a special case of this resultant infinite sum identity. It is unclear as to 
 how the WZ proof from \cite{EkhadZeilberger1994} may be altered so as to obtain our new results given in Section 
 \ref{sectionFurther}. 

 It is well-known \cite{BaruahBerndt2010,Chu2011Ramanujan,Chu2011Comp,ChuZhang2014,Hardy1924} 
  that Ramanujan's formula, as in \eqref{mainRamanujan}, may be  
  proved via   the limiting behaviour of a well-poised ${}_{5}F_{4}(1)$-series given by Dougall.  
 In  contrast, 
   our new proof of \eqref{mainRamanujan} 
     relies on a ${}_{4}F_{3}(-1)$-identity that we obtain using Mishev's work \cite{Mishev2018}.  

 Another way of proving \eqref{mainRamanujan} would be given by expressing the generating function for the sequence of cubed 
 central binomial coefficients via 
 Clausen's hypergeometric product (cf.\ \cite{BaileyBorweinBroadhurstGlasser2008}), so as to obtain an expression involving the square of the complete 
 elliptic integral of the first kind, and by then using an elliptic singular value, and some classical identities for the complete elliptic integrals. This sophisticated 
 approach is in contrast to our brief proof of Theorem \ref{maintheorem} below. Yet another proof of \eqref{mainRamanujan} was given in 
 \cite{LevrieNimbran2018} using recurrences and reindexing arguments for a family of ${}_{3}F_{2}(1)$-series, in contrast to our derivation of the 
 ${}_{4}F_{3}(-1)$-identity in \eqref{generalizedZeilberger}. 

 Yet another proof of \eqref{mainRamanujan} is given in \cite{Ojanguren2018} using a series acceleration technique due to Gosper applied to a classical 
 infinite product identity from Euler. Our proof of \eqref{mainRamanujan} below is dramatically different and considerably simpler.

\begin{theorem}\label{maintheorem}
 Ramanujan's formula for $\frac{1}{\pi}$ shown in \eqref{mainRamanujan} holds true. 
\end{theorem}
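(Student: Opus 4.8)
The plan is to apply Mishev's transform (Theorem~\ref{theoremMishev}) to a sequence $(x_n)$ chosen so that the inner double sum in \eqref{equationMishev} collapses to something explicit, and then to recognize the resulting identity as a finite version of \eqref{mainRamanujan}. Concretely, I would set $a = \tfrac12$ (or some small rational value) and choose $x_\ell$ to be a simple closed form — the natural guess, given the shape of \eqref{mainRamanujan}, is $x_\ell = \left(-\tfrac14\right)^\ell \binom{2\ell}{\ell}$ up to normalization, i.e.\ something like $x_\ell = \left[\begin{matrix} \frac12 \vspace{1mm}\\ 1\end{matrix}\right]_\ell (-1)^\ell$ — so that the inner sum $\sum_{\ell=0}^k (-k)_\ell (k+a)_\ell x_\ell$ becomes a terminating ${}_3F_2$ or ${}_2F_1$ at a special argument that can be evaluated in closed form by a classical summation theorem (Saalsch\"utz, Kummer, or the like).

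Assuming that inner evaluation produces a clean factor — something proportional to a product of Pochhammer symbols in $k$ — I would substitute it back, so that \eqref{equationMishev} becomes an identity of the form $x_n = \frac{1}{n!(a+1)_n}\sum_{k=0}^n c_k$ where $c_k$ is manifestly a hypergeometric term in $k$ (cf.\ the ${}_4F_3(-1)$-identity the author advertises as \eqref{generalizedZeilberger}). Since the left side $x_n$ has a known closed form, this is a \emph{finite} identity: a terminating ${}_4F_3$ evaluated at $-1$ equals an explicit ratio of Pochhammer symbols. The next step is to take $n \to \infty$. Here I would follow the same route the author hints at for Zeilberger's argument: rescale so the terminating parameter $-n$ is replaced by $\infty$, check that the resulting infinite series converges (the $(-1/64)^n\binom{2n}{n}^3$ decay makes this routine), and invoke Carlson's theorem \cite[p.~39]{Bailey1935} to upgrade the polynomial-in-$n$ identity to an identity valid for all suitable complex $n$, in particular for the value of $n$ that specializes the left-hand side to $\tfrac2\pi$.

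The main obstacle, I expect, is getting the bookkeeping on parameters exactly right: choosing $a$ and the seed sequence $x_n$ so that (i) the inner sum is summable by a \emph{known} theorem with no free parameters left over, and (ii) the outer sum, after simplification, genuinely reproduces the coefficient $(4n+1)$ and the base $-1/64$ rather than some off-by-a-shift cousin. It is easy to land on a nearby but wrong normalization. A secondary technical point is justifying the passage from the finite identity to the infinite one — interchanging the limit with the summation and verifying the growth hypotheses in Carlson's theorem — but since this is precisely the mechanism used in \cite{EkhadZeilberger1994} and the terms decay geometrically, I anticipate this to be routine rather than delicate. Once the terminating ${}_4F_3(-1)$ evaluation is in hand, specializing it to recover \eqref{mainRamanujan} should be a one-line computation.
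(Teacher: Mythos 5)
Your overall strategy coincides with the paper's: feed a seed sequence into Mishev's transform so that the inner sum in \eqref{equationMishev} collapses by a classical summation theorem, read off a terminating ${}_4F_3(-1)$ evaluation, extend it to non-integer $n$ by Carlson's theorem, and specialize. But the one concrete choice you commit to --- the seed $x_\ell = (-1)^\ell\left(\tfrac12\right)_\ell/(1)_\ell$ --- is precisely the step that would fail. With that seed, the inner sum $\sum_{\ell=0}^k(-k)_\ell(k+a)_\ell x_\ell$ is a terminating ${}_3F_0$ with upper parameters $-k$, $k+a$, $\tfrac12$, \emph{no} lower parameters, and argument $-1$; no theorem of Saalsch\"utz or Kummer type evaluates that in closed form, so nothing collapses. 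Your guiding instinct --- that the seed should mirror the $\binom{2\ell}{\ell}$ shape of \eqref{mainRamanujan} --- is the misdirection here: the cubed central binomial coefficients of Ramanujan's series come from the \emph{outer} kernel of \eqref{equationMishev} (the bracketed Pochhammer ratio in $k$) once one sets $a=\tfrac12$ and $n=-\tfrac12$, not from $x_\ell$. The paper's seed is the plainer $x_\ell = 1/(\ell!)^2$, which turns the inner sum into the Gauss/Chu--Vandermonde-summable ${}_2F_1$ with upper parameters $-k$, $k+a$, lower parameter $1$, and unit argument, equal to $(-1)^k(a)_k/k!$; it is this extra factor $(a)_k$ that duplicates the parameter $a$ in the numerator of \eqref{generalizedZeilberger} and ultimately produces the cube of $\left(\tfrac12\right)_k/k!$.

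A smaller correction: there is no ``$n\to\infty$'' step. Carlson's theorem upgrades \eqref{generalizedZeilberger} from nonnegative integers $n$ to suitably bounded complex $n$, and the sum becomes infinite simply because at $n=-\tfrac12$ the factor $(-n)_k=\left(\tfrac12\right)_k$ never vanishes; the specialization $a=\tfrac12$, $n=-\tfrac12$ then \emph{is} \eqref{mainRamanujan} after rewriting the summand with central binomial coefficients. You do arrive at this mechanism by the end of your second paragraph, so that part is a wording slip rather than a conceptual error; but until the seed sequence is corrected, the proposal does not contain a working proof.
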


\begin{proof}
 We set $x_{\ell} = \frac{1}{(\ell!)^2}$ in Mishev's transformation identity, as in Theorem \ref{theoremMishev}. Since $k \geq 0$ is an integer, we see 
 that we may then rewrite the inner sum in \eqref{equationMishev} as 
\begin{equation}\label{Gaussapplies}
 \sum _{\ell=0}^\infty \frac{ (-k)_\ell (k+a)_\ell }{\left( \ell! \right)^{2} }, 
\end{equation}
 since $ (-k)_\ell $ vanishes for $k < \ell$. According to Gauss' famous ${}_{2}F_{1}(1)$-identity
\begin{equation*}
 {}_{2}F_{1} \left[ \begin{matrix} 
 a, b \\ 
 c \end{matrix} \ ; \ 1 \right] = 
 \frac{ \Gamma(c) \Gamma(c - a - b) }{ \Gamma(c - a) \Gamma(c-b)}, 
\end{equation*}
 we see that the infinite series in \eqref{Gaussapplies} 
 evaluates as $ \frac{(-1)^k (a)_k}{k!}$. So, this immediately gives us that Mishev's 
 identity, as in \eqref{equationMishev}, for the special case whereby $x_{\ell} = \frac{1}{(\ell!)^2}$, gives us that 
\begin{equation}\label{generalizedZeilberger}
 {}_{4}F_{3}\!\!\left[ 
 \begin{matrix}
 a, a, -n, 1 + \frac{a}{2} \vspace{1mm} \\ 
 1, \frac{a}{2}, n + a + 1
\end{matrix} \ ; \ -1 \right] = \frac{ (a+1)_{n} }{n!}, 
\end{equation}
 for nonnegative integers $n$ and for suitably bounded $a$ (cf.\ \cite{EkhadZeilberger1994}). 
 As in \cite{EkhadZeilberger1994}, by Carlson's 
 theorem, we find that \eqref{generalizedZeilberger} also holds for suitably bounded non-integer values for $n$. In particular, setting $a = 
 \frac{1}{2} $ and $ n = - \frac{1}{2}$, this immediately gives us that 
\begin{equation}\label{preciselysame}
 {}_{4}F_{3}\!\!\left[ 
 \begin{matrix}
 \frac{1}{2}, \frac{1}{2}, \frac{1}{2}, \frac{5}{4} \vspace{1mm} \\ 
 \frac{1}{4}, 1, 1 
 \end{matrix} \ ; \ -1 \right] = \frac{2}{\pi}, 
\end{equation}
 and this is precisely the same as Ramanujan's formula in \eqref{mainRamanujan}, 
 rewriting the summand of the left-hand side of \eqref{preciselysame} using central binomial coefficients. 
\end{proof}

 The above application of Mishev's transform does not appear in \cite{Mishev2018}, and does not seem to be present in any relevant literature. As stated 
 above, Mishev's article \cite{Mishev2018} does not seem to have been cited previously, as of this note being written. This, together with our above 
 application of Theorem \ref{theoremMishev}, suggests that Mishev's 2018 article in \cite{Mishev2018} is underappreciated. It also appears that no 
 equivalent or meaningfully similar formulation of our proof of \eqref{mainRamanujan} has appeared in any in any past literature. 
 For research relevant to 
 Zeilberger's WZ proof in \cite{EkhadZeilberger1994}, we refer to 
 Guillera's work in WZ theory \cite{Guillera2006,Guillera2016,Guillera2010} 
 and past work on supercongruences concerning Ramanujan-type sums \cite{Guo2018,Wang2018,Zudilin2009}. 

\section{Further results}\label{sectionFurther}
 The hypergeometric identity in \eqref{generalizedZeilberger} is of interest in its own right, 
 and motivates the use of Mishev's transform \cite{Mishev2018} with inputted sequences given by variants 
 of the $x$-function involved in our proof of Theorem \ref{maintheorem}. 

\begin{example}
 Setting $x_{\ell} = \frac{1}{\ell! (\ell + 1)!}$ in the formulation of Mishev's transform given in 
 Theorem \ref{theoremMishev}, we may obtain that 
\begin{equation*}
 {}_{4}F_{3}\!\!\left[ 
 \begin{matrix}
 a, a - 1, -n, 1 + \frac{a}{2} \vspace{1mm} \\ 
 2, \frac{a}{2}, n + a + 1
\end{matrix} \ ; \ -1 \right] = \frac{ \left( a + 1 \right)_{n} }{(n + 1)!} 
\end{equation*}
 for suitably bounded $a$ and $n$; we may again apply Carlson's theorem, as above. 
\end{example}

\begin{example}
 Setting $x_{\ell} = \frac{1}{\ell! \left(\ell + \frac{1}{2}\right)!}$, 
 Mishev's transform gives us that 
\begin{equation*}
 {}_{4}F_{3}\!\!\left[ 
 \begin{matrix}
 a, -n, \frac{a}{2} + 1, a - \frac{1}{2} \vspace{1mm} \\ 
 \frac{3}{2}, \frac{a}{2}, a + n + 1
\end{matrix} \ ; \ -1 \right] = \frac{ \left( a + 1 \right)_{n} \, \sqrt{\pi} }{ 2 \left( n + \frac{1}{2} \right)! } 
\end{equation*}
 for suitably bounded $a$ and $n$. 
 For example, we may obtain the exotic hypergeometric series evaluation by setting $a = \frac{1}{4}$ and $n = \frac{1}{4}$: 
\begin{equation*}
 {}_{4}F_{3}\!\!\left[ 
 \begin{matrix}
 -\frac{1}{4}, -\frac{1}{4}, \frac{1}{4}, \frac{9}{8} \vspace{1mm} \\ 
 \frac{1}{8}, \frac{3}{2}, \frac{3}{2} 
\end{matrix} \ ; \ -1 \right] = {\frac {2\,\sqrt {2}}{3}}. 
\end{equation*}
\end{example}

 We find it appropriate to highlight the following evaluation for a ${}_{4}F_3(-1)$-series with 
 three real parameters as a theorem. 

\begin{theorem}\label{threereal}
 The evaluation 
\begin{equation*}
 {}_{4}F_{3}\!\!\left[ 
 \begin{matrix}
 a, -n, \frac{a}{2} + 1, a - b + 1 \vspace{1mm} \\ 
 b, \frac{a}{2}, 1 + a + n 
\end{matrix} \ ; \ -1 \right] 
 = \left[ \begin{matrix} a + 1 \vspace{1mm} \\ b 
 \end{matrix} \right]_{n} 
\end{equation*}
 holds true for suitably bounded real $a$, $b$, and $n$. 
\end{theorem}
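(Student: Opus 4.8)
The plan is to apply Mishev's transform (Theorem~\ref{theoremMishev}) to the sequence
\[
 x_{\ell} = \frac{1}{\ell!\,(b)_{\ell}},
\]
which generalizes the input $x_{\ell} = 1/(\ell!)^{2}$ used in the proof of Theorem~\ref{maintheorem}: that input is the present one in the case $b = 1$, and correspondingly the identity to be proved reduces to \eqref{generalizedZeilberger} when $b = 1$. As in the proof of Theorem~\ref{maintheorem}, since $(-k)_{\ell}$ vanishes for $\ell > k$, the inner sum $\sum_{\ell = 0}^{k} (-k)_{\ell} (k + a)_{\ell} x_{\ell}$ appearing in \eqref{equationMishev} is the terminating series
\[
 {}_{2}F_{1}\!\!\left[\begin{matrix} -k,\ k + a \\ b \end{matrix}\ ;\ 1\right] = \frac{(b - a - k)_{k}}{(b)_{k}},
\]
where I have applied Gauss' ${}_{2}F_{1}(1)$-theorem, exactly as in Section~\ref{sectionmain} but now with lower parameter $b$ in place of $1$.

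The next step is routine Pochhammer bookkeeping: reversing the order of the $k$ factors of $(b - a - k)_{k}$ gives $(b - a - k)_{k} = (-1)^{k} (a - b + 1)_{k}$, so the inner sum equals $(-1)^{k} (a - b + 1)_{k}/(b)_{k}$. Substituting this into the right-hand side of \eqref{equationMishev} and using $(1)_{k} = k!$ to absorb one factorial, the right-hand side of \eqref{equationMishev} becomes
\[
 \frac{1}{n!\,(a + 1)_{n}}\; {}_{4}F_{3}\!\!\left[\begin{matrix}
 a,\ 1 + \tfrac{a}{2},\ -n,\ a - b + 1 \\
 \tfrac{a}{2},\ 1 + a + n,\ b \end{matrix}\ ;\ -1\right].
\]
On the other hand the left-hand side of \eqref{equationMishev} is simply $x_{n} = 1/(n!\,(b)_{n})$. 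Cancelling the factor $1/n!$ and multiplying through by $(a + 1)_{n}$ then yields the claimed evaluation, with value $(a + 1)_{n}/(b)_{n} = \left[\begin{smallmatrix} a + 1 \\ b \end{smallmatrix}\right]_{n}$, for every nonnegative integer $n$ (and for $b \notin \{0, -1, -2, \dots\}$, so that the Pochhammer symbols $(b)_{k}$ do not vanish).

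Finally, as in the proof of Theorem~\ref{maintheorem} and as in \cite{EkhadZeilberger1994}, I would invoke Carlson's theorem to upgrade the identity from $n \in \mathbb{N}_{0}$ to suitably bounded non-integer real $n$: both sides are analytic in $n$ on a right half-plane, they agree on $\mathbb{N}_{0}$, and the required growth estimate holds in the range where the ${}_{4}F_{3}(-1)$-series on the left converges, which is precisely the content of the ``suitably bounded'' hypothesis. The algebra is short; the only points requiring care are the sign bookkeeping converting $(b - a - k)_{k}$ into $(-1)^{k} (a - b + 1)_{k}$ and --- the main obstacle --- spelling out the convergence and growth conditions on $a$, $b$, $n$ under which Carlson's theorem legitimately applies, which is done exactly as in \cite{EkhadZeilberger1994}.
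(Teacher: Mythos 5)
Your proposal is correct and is exactly the paper's argument: the paper's own proof is a one-line statement that the result follows from Mishev's transform with $x_{\ell} = 1/((1)_{\ell}(b)_{\ell})$ followed by Carlson's theorem, and you have simply supplied the intermediate details (the Gauss/Chu--Vandermonde evaluation of the inner sum and the sign conversion $(b-a-k)_{k} = (-1)^{k}(a-b+1)_{k}$), all of which check out. No substantive difference from the paper's route.
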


\begin{proof}
 This follows in a direct way from Mishev's transform, as given in Theorem \ref{theoremMishev}, 
 by setting $x_{\ell} = \frac{1}{ \left( 1 \right)_{\ell} \left( b \right)_{\ell} }$
 for a real parameter $b$, and by then applying Carlson's theorem. 
\end{proof}

 By again setting $a = \frac{1}{2}$ and $n = -\frac{1}{2}$, recalling our proof of Ramanujan's formula in \eqref{mainRamanujan}, Theorem \ref{threereal} 
 then gives us an infinite family of generalizations of this famous formula. 

\subsection{An identity due to Levrie}
 We offer a simplified proof and a generalization of the following identity that had been introduced by Levrie and proved in 
 \cite{Levrie2010} via Fourier--Legendre theory. The below proof is remarkably simple and different compared to Levrie's proof, noting that we do not 
 require orthogonal polynomials, integration identities for Legendre polynomials, etc. 

\begin{theorem}\label{theoremLevrie2010}
 (Levrie, 2010) The identity 
\begin{align*}
 & \sum_{i = 0}^{\infty}
 (-1)^{i} \frac{ \left( \frac{1}{2} \right)_{i}^{3} }{ (i!)^{3} }
 \frac{ 4 i + 1}{ (2 i - (2 k - 1)) \cdots (2 i - 1)
 (i + 1) \cdots (i + k) } \\ 
 & = (-1)^{k} \frac{1}{ 2^{k-1} \left( \frac{1}{2} \right)_{k}^{2} } \frac{1}{\pi}
\end{align*}
 holds for all natural numbers $k$ \cite{Levrie2010}. 
\end{theorem}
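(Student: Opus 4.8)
The plan is to read Levrie's identity off Theorem~\ref{threereal} by choosing the three free parameters so that the summand on the left-hand side matches. Since that summand carries the factor $\left(\tfrac12\right)_i^{3}/(i!)^{3}$ and the Ramanujan-type weight $4i+1$, I would take $a=\tfrac12$ and $n=-\tfrac12$, exactly as in the proof of Theorem~\ref{maintheorem}; and I would take $b=k+1$, which is forced by requiring the numerator parameter $a-b+1=\tfrac12-k$ to reproduce the rising factorial that governs the denominator product $(2i-(2k-1))\cdots(2i-1)$ in Levrie's series. With these choices, expanding the ${}_{4}F_{3}(-1)$ of Theorem~\ref{threereal}, using the telescoping identity $\frac{(5/4)_{i}}{(1/4)_{i}}=4i+1$, and noting that the lower parameter $1+a+n=1$ supplies a second factor $1/i!$, reduces the left-hand side of Theorem~\ref{threereal} to $\sum_{i=0}^{\infty}(-1)^{i}\frac{(1/2)_{i}^{2}}{(i!)^{2}}(4i+1)\frac{(1/2-k)_{i}}{(k+1)_{i}}$.

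Next I would record two elementary reductions of Levrie's denominator. Writing the $j$-th factor of the odd-number product as $2i-(2k-2j+1)=2\left(i-k+j-\tfrac12\right)$ gives
\begin{equation*}
 (2i-(2k-1))\cdots(2i-1)=2^{k}\left(i-k+\tfrac12\right)_{k}=2^{k}(-1)^{k}\left(\tfrac12\right)_{k}\,\frac{\left(\tfrac12\right)_{i}}{\left(\tfrac12-k\right)_{i}},
\end{equation*}
where the last step uses $\left(i-k+\tfrac12\right)_{k}=\frac{\Gamma(1/2)}{\Gamma(1/2-k)}\cdot\frac{(1/2)_{i}}{(1/2-k)_{i}}$ together with $\frac{\Gamma(1/2)}{\Gamma(1/2-k)}=\left(\tfrac12-k\right)_{k}=(-1)^{k}\left(\tfrac12\right)_{k}$; and $(i+1)\cdots(i+k)=(i+1)_{k}=k!\,\frac{(k+1)_{i}}{(1)_{i}}$. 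Substituting these two expressions into Levrie's summand and comparing with the reduced series above shows that the general term of the ${}_{4}F_{3}(-1)$ equals $2^{k}(-1)^{k}\left(\tfrac12\right)_{k}k!$ times the general term of Levrie's series, so the ${}_{4}F_{3}(-1)$ itself equals that same constant times the left-hand side of Theorem~\ref{theoremLevrie2010}.

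Finally I would evaluate the right-hand side of Theorem~\ref{threereal} at $(a,n,b)=\left(\tfrac12,-\tfrac12,k+1\right)$, namely $\frac{(a+1)_{n}}{(b)_{n}}=\frac{(3/2)_{-1/2}}{(k+1)_{-1/2}}$, using $(\tfrac32)_{-1/2}=\Gamma(1)/\Gamma(\tfrac32)=2/\sqrt{\pi}$ and $(k+1)_{-1/2}=\Gamma\!\left(k+\tfrac12\right)/k!=\left(\tfrac12\right)_{k}\sqrt{\pi}/k!$, and then divide by the constant $2^{k}(-1)^{k}\left(\tfrac12\right)_{k}k!$; a one-line simplification collapses the result to $(-1)^{k}\,\frac{1}{2^{k-1}\left(\tfrac12\right)_{k}^{2}}\,\frac{1}{\pi}$, which is the asserted closed form. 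I expect the only genuine subtlety, beyond the bookkeeping with Pochhammer symbols of fractional and negative index, to be verifying that the triple $\left(\tfrac12,-\tfrac12,k+1\right)$ lies in the parameter range where Theorem~\ref{threereal} (hence Carlson's theorem) applies; here the general term decays like $i^{-2k-1/2}$ as $i\to\infty$, so the series converges absolutely for every $k\ge 1$ and the required analyticity and growth hypotheses hold comfortably.
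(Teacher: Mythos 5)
Your proposal is correct and follows essentially the same route as the paper: specialize Theorem \ref{threereal} at $a=\tfrac12$, $n=-\tfrac12$ (as in the proof of Theorem \ref{maintheorem}), take $b=k+1$, and convert the Pochhammer quotient $\left(\tfrac32-b\right)_i/(b)_i$ into Levrie's denominator products; your constant $2^{k}(-1)^{k}\left(\tfrac12\right)_{k}k!=(-1)^{k}(2k)!/2^{k}$ agrees with the factor $(-1)^{b+1}(2(b-1))!/2^{b-1}$ appearing in the paper's rewriting of \eqref{bPochhammerquotient}. The only cosmetic difference is that the paper first records the real-parameter identity \eqref{Levriegeneralization} before specializing $b$ to an integer, whereas you substitute $b=k+1$ from the outset.
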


\begin{proof}
 Again, we set $a = \frac{1}{2}$ and $n = -\frac{1}{2}$ in Theorem \ref{threereal}, giving us that 
\begin{equation}\label{Levriegeneralization}
 {}_{4}F_{3} \left[ 
 \begin{matrix}
 \frac{1}{2}, \frac{1}{2}, \frac{5}{4}, \frac{3}{2} - b \vspace{1mm} \\ 
 \frac{1}{4}, 1, b 
 \end{matrix} \ ; \ -1 \right] 
 = \frac{2 \, \left( b - \frac{1}{2} \right)_{\frac{1}{2}} }{\sqrt{\pi}} 
\end{equation}
 for suitably bounded real $b$. 
 For the special case whereby $b$ is a natural number, and letting $i \in \mathbb{N}_{0}$, we find that 
\begin{equation}\label{bPochhammerquotient}
 \frac{ \left( \frac{3}{2} - b \right)_{i} }{ \left( b \right)_{i} } 
\end{equation}
 may be written as 
\begin{align*}
 & (-1)^{b+1} \frac{ \left( \frac{ (2(b-1))! }{2^{b-1}} \right) }{ (2i-1) \cdots (2 i - 2 (b-1) + 1 ) 
 (i + 1) \cdots (i + b - 1 ) } \frac{\binom{2i}{i}}{4^i}. 
\end{align*}
 Rewriting the summand on the left-hand side of \eqref{Levriegeneralization} accordingly, we obtain an equivalent form
 of the desired identity due to Levrie. 
\end{proof}

 We may regard \eqref{Levriegeneralization} as generalizing Levrie's identity, as given in Theorem \ref{theoremLevrie2010}, for real values $b$. 

\subsection{Extensions of a series evaluation due to Guillera}
 A remarkable Ramanujan-like series due to Guillera \cite{Guillera2013} that involves harmonic numbers is reproduced in an equivalent form as below, noting 
 the close resemblance to Ramanujan's series \eqref{mainRamanujan}: 
\begin{equation}\label{expandedGuillera}
 \frac{1}{6 \sqrt{2} \pi } \left(\frac{\Gamma \left(\frac{1}{8}\right) 
 \Gamma \left(\frac{3}{8}\right)}{\Gamma \left(\frac{1}{4}\right) 
 \Gamma \left(\frac{3}{4}\right)}\right)^2 - \frac{4 \ln (2)}{\pi }
 = \sum _{n=0}^{\infty} \left(-\frac{1}{64}\right)^n \binom{2 n}{n}^3 (4 n+1) H_n. 
\end{equation}
 This is proved in \cite{Guillera2013} using the identity 
$$ \frac{1}{2} \sum_{n=0}^{\infty} (-1)^{n} \frac{ \left( \frac{1}{2} + x \right)^{3}_{n} }{\left( 1 \right)_{n}^{3} } (1 + 2 x + 4 n) 
 = \frac{\cos \pi x}{\pi}$$
 obtained in \cite{Chu2011Comp} together with an expansion 
 for $$ \frac{1}{2} \sum_{n=0}^{\infty} (-1)^{n} \frac{ \left( \frac{1}{2} + x \right)^{3}_{n} }{ \left(1 + x \right)_{n}^{3} } 
 \left( 1 + 4 x + 4 n \right) $$
 offered in \cite{Guillera2007}. It is unclear as to how this kind of approach may be generalized so as to evaluate series involving 
\begin{equation}\label{cubedbinomialDelta}
 \left( \frac{1}{64} \right)^{n} \binom{2n}{n}^3 \Delta(n) 
\end{equation}
 more generally, letting $(\Delta(n) : n \in \mathbb{N}_{0})$ denote a sequence of harmonic-type numbers. 
 This has motived our recent work in \cite{Campbell2021} on the use of the WZ method 
 in the evaluation of a Ramanujan-inspired series involving \eqref{cubedbinomialDelta}. 
 However, it is unclear as to how the methodologies from \cite{Campbell2021,Guillera2013} 
 may be applied to evaluate the series obtained by replacing the harmonic factor $H_{n}$ in 
 \eqref{expandedGuillera} with, say, $H_{2n}$ or $O_{n} = 1 + \frac{1}{3} + \cdots + \frac{1}{2n-1}$. 
 We have successfully solved this problem, through an application of our generalization, as in \eqref{Levriegeneralization}, 
 of Levrie's identity. It appears that known integration and coefficient-extraction 
 techniques from References as in \cite{Campbell2019,ChuCampbell2021,WangChu2020} for evaluating series involving 
\begin{equation*}
 \left( \frac{1}{16} \right)^{n} \binom{2n}{n}^2 \Delta(n) 
\end{equation*}
 cannot be applied to the much more difficult problem of evaluating harmonic sums involving \emph{cubed}, as opposed to squared, 
 central binomial coefficients. This emphasizes the remarkable nature about our results as in 
 Theorem \ref{newRamanujan} below. 

 Note that the digamma function is such that the following holds 
 (cf.~\cite{Rainville1960}, \S9): 
\begin{equation}\label{psidef}
 \psi(z) = \frac{d}{dz}\ln\Gamma(z)=\frac{\Gamma'(z)}{\Gamma(z)} =-\gamma+\sum_{n=0}^{\infty}\frac{z-1}{(n+1)(n+z)}, 
\end{equation}
 with $\gamma$ the Euler--Mascheroni constant. 

\begin{theorem}\label{newRamanujan}
 The Ramanujan-like series evaluation 
 $$ -\frac{2 \ln 2}{\pi} = \sum_{i = 0}^{\infty} \left( -\frac{1}{64} \right)^{i} \binom{2i}{i}^3 (4 i +1) H_{2i} $$
 holds true. 
\end{theorem}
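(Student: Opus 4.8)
The plan is to derive Theorem~\ref{newRamanujan} by differentiating the generalized Levrie identity \eqref{Levriegeneralization} in the free parameter $b$ and specializing at $b = 1$. First I would record that $b = 1$ is exactly the specialization of \eqref{Levriegeneralization} that reproduces Ramanujan's formula: the quotient $(\tfrac{5}{4})_{i}/(\tfrac{1}{4})_{i}$ contributes $4i+1$, the quotient $(\tfrac{3}{2}-b)_{i}/(b)_{i}$ becomes $(\tfrac12)_{i}/(1)_{i} = \binom{2i}{i}/4^{i}$, and together with $(\tfrac12)_{i}^{2}/(i!)^{2} = \binom{2i}{i}^{2}/16^{i}$ the summand collapses to $(-\tfrac{1}{64})^{i}\binom{2i}{i}^{3}(4i+1)$, while the right-hand side becomes $\tfrac{2\Gamma(1)}{\sqrt{\pi}\,\Gamma(1/2)} = \tfrac{2}{\pi}$. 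So \eqref{Levriegeneralization} already encodes \eqref{mainRamanujan} at $b=1$, and differentiating it once at $b=1$ should pull down precisely a harmonic factor attached to the Ramanujan summand.

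Write the left side of \eqref{Levriegeneralization} as $\sum_{i\ge 0} f_{i}(b)$, where $f_{i}(b) = (-1)^{i}(4i+1)\tfrac{\binom{2i}{i}^{2}}{16^{i}}\,\tfrac{(3/2-b)_{i}}{(b)_{i}}$, so that the only $b$-dependence sits in $(3/2-b)_{i}/(b)_{i}$; and write the right side, via $(b-\tfrac12)_{1/2} = \Gamma(b)/\Gamma(b-\tfrac12)$, as $g(b) = \tfrac{2\,\Gamma(b)}{\sqrt{\pi}\,\Gamma(b-1/2)}$. Using the digamma identity \eqref{psidef}, a short computation gives
\[
 \frac{d}{db}\log\frac{\left(\tfrac{3}{2}-b\right)_{i}}{(b)_{i}}\bigg|_{b=1}
 = -\sum_{j=0}^{i-1}\left(\frac{1}{j+\tfrac{1}{2}} + \frac{1}{j+1}\right)
 = -2H_{2i},
\]
the last step being the elementary identity $\sum_{j=0}^{i-1}\bigl(\tfrac{1}{j+1/2}+\tfrac{1}{j+1}\bigr) = 2\sum_{k=1}^{2i}\tfrac1k = 2H_{2i}$ obtained by recombining the odd and even terms of $H_{2i}$. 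Hence $f_{i}'(1) = -2H_{2i}\,f_{i}(1) = -2H_{2i}\,(-\tfrac{1}{64})^{i}\binom{2i}{i}^{3}(4i+1)$. On the right, $g'(b) = g(b)\bigl(\psi(b)-\psi(b-\tfrac12)\bigr)$, and $\psi(1) = -\gamma$, $\psi(\tfrac12) = -\gamma - 2\ln 2$ give $g'(1) = \tfrac{2}{\pi}\cdot 2\ln 2 = \tfrac{4\ln 2}{\pi}$.

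Equating the $b$-derivatives of the two sides of \eqref{Levriegeneralization} at $b = 1$ then yields $-2\sum_{i\ge0}(-\tfrac{1}{64})^{i}\binom{2i}{i}^{3}(4i+1)H_{2i} = \tfrac{4\ln 2}{\pi}$, which is the asserted evaluation after dividing by $-2$. The one point demanding care --- and the main obstacle --- is legitimizing the term-by-term differentiation at $b = 1$: the ${}_{4}F_{3}(-1)$ series in \eqref{Levriegeneralization} has parameter excess $2b - \tfrac52$, which is negative near $b = 1$, so the series there is only conditionally convergent (its terms decay like $i^{-1/2}$, or like $i^{-1/2}\log i$ after differentiating in $b$), and absolute-convergence arguments are unavailable. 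I would instead invoke an Abel--Dirichlet type uniform convergence argument on a small real neighborhood of $b = 1$ --- using that the partial sums of $(-1)^{i}$ are bounded while the moduli of the remaining factors are eventually monotone in $i$, uniformly in $b$, and tend to $0$ --- so that $\sum_{i} f_{i}(b)$ converges locally uniformly and differentiation past the summation sign is valid (e.g.\ by Weierstrass's theorem for series of analytic functions). Combined with the validity of \eqref{Levriegeneralization} near $b = 1$ already supplied by Carlson's theorem, this completes the proof.
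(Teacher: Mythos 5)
Your proposal is correct and follows essentially the same route as the paper: differentiate the generalized Levrie identity \eqref{Levriegeneralization} with respect to $b$ and set $b=1$. Your execution is in fact a bit cleaner --- logarithmic differentiation of $(\tfrac32-b)_i/(b)_i$ yields the factor $-2H_{2i}$ in one step, where the paper passes through digamma values, $O_i$ and $H_i$, and the Legendre duplication formula --- and you explicitly address the justification of term-by-term differentiation of the conditionally convergent series, a point the paper leaves implicit.
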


\begin{proof}
 Applying $\frac{d}{db}$ to the summand factor highlighted in \eqref{bPochhammerquotient}, we obtain: 
\begin{align*}
 & -{\frac {1}{ \left( b \right)_{i} } \left( {\frac{3}{2}}-b \right)_{i} \left( \psi \left( i+{\frac{3}{2}}- 
 b \right) -\psi \left( {\frac{3}{2}}-b \right) \right) } - \\ 
 & {\frac {\psi 
 \left( i+b \right) -\psi \left( b \right) }{ \left( b \right)_{i} } \left( {\frac{3}{2}} - b \right)_{i} }. 
\end{align*}
 So, applying term-by-term differentiation to both sides of our generalization of Levrie's identity, with respect to the free variable in 
 \eqref{Levriegeneralization}, and then setting $b = 1$, this gives us that the sum of all expressions of the form $$ {\frac { \left( 4\,i+1 \right) \left( -1 
 \right)^{i + 1}}{{\pi}^{{\frac{3}{2}}} \left( i! \right) ^{3}} \left( \psi \left( i+{\frac{1}{2}} \right) +2\,\gamma+2\,\ln \left( 2 \right) +\psi \left( i+1 \right) 
 \right) \left( \left( i-{\frac{1}{2}} \right) ! \right) ^{3}} $$ for $i \in \mathbb{N}_{0}$ admits the following closed-form evaluation: $4\,{\frac {\ln \left( 2 
 \right) }{\pi}}$. We may find that $\psi\left( i + \frac{1}{2} \right) = 2 O_{i} - \gamma - 2 \ln 2$, in view of \eqref{psidef}. This, together with the equality 
 $\psi(i + 1) = H_{i} - \gamma$, give us that $$ \sum_{i=0}^{\infty} (-1)^{i+1} (2 O_{i} + H_{i} ) (4 i + 1) \frac{ \left( \left( i - \frac{1}{2} \right)! 
 \right)^{3} }{\pi^{3/2} (i!)^{3}} = 4\,{\frac {\ln \left( 2 \right) }{\pi}}. $$ Making use of the Legendre duplication formula
 \cite[p.\ 256]{AbramowitzStegun1992}, this gives us that $$ 
 \sum_{i=0}^{\infty} \left( - \frac{1}{64} \right)^{i} \binom{2i}{i}^3 (4 i + 1) (2 O_{i} + H_{i}) = -4\,{\frac {\ln \left( 2 \right) }{\pi}}. $$ In view of the 
 relation $ O_{i} = H_{2 i} - \frac{1}{2} H_i$, we obtain the desired result. 
\end{proof}

 We see that the above Theorem, in conjunction with Guillera's formula in \eqref{expandedGuillera}, also allows us to evaluate both 
$$ \sum_{i = 0}^{\infty} \left( -\frac{1}{64} \right)^{i} \binom{2i}{i}^3 (4 i +1) H_{2i}' $$
 and 
$$ \sum_{i = 0}^{\infty} \left( -\frac{1}{64} \right)^{i} \binom{2i}{i}^3 (4 i +1) O_{i}, $$
 letting $H_{n}' = 1 - \frac{1}{2} + \cdots + \frac{(-1)^{n+1}}{n}$ denote the $n^{\text{th}}$ alternating harmonic number. 
 It appears that Abel-type summation lemmas, following the techniques from \cite{Campbell2021}, 
 cannot be used to evaluate the above summations. 

 We are actively involved in exploring the application of Mishev's transform in conjunction with 
  WZ theory, and through the use of the modified Abel lemma on summation by parts. 
 Also, by setting $x_{\ell} = \frac{y^{\ell}}{ (1)_{\ell}^{2} }$ in 
 Theorem \ref{theoremMishev} for a variable $y$, 
 this gives us that the inner sum in \eqref{equationMishev} 
 may be written as a Jacobi polynomial; 
 this motivates the application of Mishev's transform 
 using the theory of orthogonal polynomials.

 \ 

John M.\ Campbell

York University

Toronto, Ontario, Canada

 \ 

{\tt jmaxwellcampbell@gmail.com}

\end{document}